 \newtheorem{thm}{Theorem}[section]
 \newtheorem{lem}[thm]{Lemma}
 \theoremstyle{definition}
 \newtheorem{rem}[thm]{Remark}
 \numberwithin{equation}{section}
\def\u0{u_{0}}
\def\l2{l_{2}}
\def\t{\tilde}
\theoremstyle{definition}
\theoremstyle{remark}
\newcommand{\Rmnum}[1]{\expandafter\@slowromancap\romannumeral #1@}
\begin{document}
\title{Improved Liouville theorems for axially symmetric Navier-Stokes equations\footnote{This is the English translation of the paper in Science China Mathematics, Vol. 47, no. 10, 2017, a special issue dedicated to Professor  Li Ta-Tsien, for his 80th birthday.}}
\author{Zhen Lei\footnote{School of Mathematical Sciences; LMNS and Shanghai Key Laboratory for Contemporary Applied Mathematics, Fudan University, Shanghai 200433, P. R.China. Email: zlei@fudan.edu.cn}, Qi S. Zhang\footnote{Department of Mathematics, University of California, Riverside, CA 92521, USA. Email: qizhang@math.ucr.edu}, Na Zhao\footnote{School of Mathematics, Fudan University, Shanghai 200433, P. R. China. Email: nzhao13@fudan.edu.cn.}}

\date{}

\maketitle

\begin{abstract}
  In this paper, we consider the Liouville property for ancient solutions of the incompressible Navier-Stokes equations. In 2D and the 3D axially symmetric case without swirl, we prove sharp Liouville theorems for smooth ancient mild solutions: velocity fields $v$ are constants if vorticity fields satisfy certain condition and $v$ are sublinear with respect to spatial variables, and we also give counterexamples when $v$ are linear with respect to spatial variables. The condition which vorticity fields need to satisfy is
  $\lim\limits_{|x|\rightarrow +\infty}|w(x,t)|=0$ and
  $\lim\limits_{r\rightarrow +\infty}\frac{|w|}{\sqrt{x_1^2+x_2^2}}=0$ uniformly for all $t\in(-\infty,0)$ in 2D and 3D axially symmetric case without swirl, respectively.
  In the case when solutions are axially symmetric with nontrivial swirl, we prove that if $\Gamma=rv_\theta\in L^\infty_tL^p_x(\mathbb{R}^3\times(-\infty,0))$ where $1\leq p<\infty$, then bounded ancient mild solutions are constants.

\end{abstract}

\maketitle





\section{Introduction}
The Navier-Stokes equations describe the time evolution of solutions of mathematical models of viscous incompressible fluids. In Cartesian coordinates, with viscosity constant being 1, they are
\begin{equation}\label{2DNS}
  \begin{cases}
    v_t+v\cdot \nabla v+\nabla p=\Delta v,\\
    \nabla\cdot v=0,
  \end{cases}
\end{equation}
where $v(x,t)$ represents the velocity field and $p(x,t)$ represents the pressure.
As is well known, the Navier-Stokes equations are globally well-posed in 2D. However, the global well-posedness is still unclear in 3D even if we consider axially symmetric solutions.

In this paper, we consider the Liouville property for ancient solutions of the incompressible Navier-Stokes equations. In 2D and the 3D axially symmetric swirl free case, we prove sharp Liouville theorems for smooth ancient mild solutions: velocity fields $v$ are constants if vorticity fields satisfy certain decay or growth condition and $v$ are sublinear with respect to spatial variables, and we also give counterexamples when $v$ are linear with respect to spatial variables. In the case when solutions are axially symmetric with nontrivial swirl, we prove that if $\Gamma=rv_\theta\in L^\infty_tL^p_x(\mathbb{R}^3\times(-\infty,0))$ where $1\leq p<\infty$, or $\lim\limits_{r\rightarrow\infty}\Gamma = 0$ uniformly, then bounded ancient mild solutions are constants.

In order to present the result precisely, let us first recall some basic concepts of axially symmetric solutions.
In cylindrical coordinates $r,\theta,z$ with $(x_1,x_2,x_3)=(r\cos\theta,r\sin\theta,z)$, axially symmetric solutions are of the form
\begin{equation}\nonumber
\begin{cases}
  v(x, t) = v_r(r, z, t)e_r +  v_\theta(r, z, t)e_\theta +  v_z(r,
z, t)e_z,\\
  p(x,t)=p(r,z,t).
\end{cases}
\end{equation}
The components $v_r,v_\theta,v_z,p$ are all independent of the angle of rotation $\theta$. Here $r = r(x) = \sqrt{x_1^2 + x_2^2}$ and
$\{e_r,e_\theta, e_z\}$ are the basis vectors for $\mathbb{R}^3$ given by
\begin{equation}\nonumber
e_r = \begin{pmatrix}\frac{x_1}{r}\\ \frac{x_2}{r}\\ 0
  \end{pmatrix},
\quad e_\theta = \begin{pmatrix}- \frac{x_2}{r}\\ \frac{x_1}{r}\\
0
  \end{pmatrix},
\quad e_z = \begin{pmatrix}0\\ 0\\ 1
  \end{pmatrix}.
\end{equation}
It is well known that $v_r,v_\theta,v_z$ satisfy
\begin{equation}\label{axi-NS}
\begin{cases}
\partial_tv_r + (v_re_r + v_ze_z)\cdot\nabla v_r - \frac{(v_\theta)^2}{r}
  + \partial_rp = \big(\Delta - \frac{1}{r^2}\big)v_r,\\[-4mm]\\
\partial_tv_\theta + (v_re_r + v_ze_z)\cdot\nabla v_\theta + \frac{v_rv_\theta}{r}
  = \big(\Delta - \frac{1}{r^2}\big)v_\theta,\\[-4mm]\\
\partial_tv_z + (v_re_r + v_ze_z)\cdot\nabla v_z + \partial_zp = \Delta v_z,\\[-4mm]\\
\partial_rv_r + \frac{v_r }{r} + \partial_zv_z = 0,
\end{cases}
\end{equation}
where $\Delta$ is the cylindrical scalar Laplacian and $\nabla$ is the cylindrical gradient field:
\begin{equation*}
  \Delta=\partial_r^2+\frac{1}{r}\partial_r+\frac{1}{r^2}\partial^2_\theta+\partial_z^2,
  \,\,\nabla=(\partial_r,\frac{1}{r}\partial_\theta,\partial_z).
\end{equation*}

Recall that if the swirl $v_\theta=0$, equations \eqref{axi-NS} then can be written as
\begin{equation}\label{noswirlequ}
\begin{cases}
\partial_tv_r + (v_re_r + v_ze_z)\cdot\nabla v_r
  + \partial_rp = \big(\Delta - \frac{1}{r^2}\big)v_r,\\[-4mm]\\
\partial_tv_z + (v_re_r + v_ze_z)\cdot\nabla v_z+\partial_zp = \Delta v_z,\\[-4mm]\\
\nabla\cdot v = \partial_rv_r +
\frac{v_r}{r} + \partial_zv_z = 0.
\end{cases}
\end{equation}
For \eqref{noswirlequ} where the swirl component of the velocity field is trivial, independently, Lady$\mathrm{\check{z}}$henskaya \cite{L1968} and Uchoviskii, Yudovich \cite{UY} proved that weak solutions are regular for all time (see also \cite{LMNP}). If the swirl $v_\theta\not\equiv 0$, the global well-posedness of \eqref{axi-NS} is still unclear. Recently, tremendous efforts and interesting progresses have been made on the regularity problem of the axially symmetric Navier-Stokes equations with nontrivial swirl, for example, \cite{BZ,CL,CSYT2008,CSYT2009,HL,HLL,KNSS,LZ,WZ,S,JX,LNZ,LZ-2,LMNP,NP,NP-2,SS,TX,ZZ}. We point out that in \cite{CSYT2008} and \cite{CSYT2009}, Chen, Strain, Tsai and Yau proved that suitable weak solutions of \eqref{axi-NS} are smooth if the velocity field $v$ satisfies $r|v| \leq C <\infty$.
Their method is based on the classical results by Nash \cite{Nash}, Moser \cite{Moser} and De Giorgi \cite{DG}. In \cite{KNSS}, Koch, Nadirashvili, Seregin and $\mathrm{\check{S}}$ver$\mathrm{\acute{a}}$k proved the same result using a totally different method. The core of their proof is the Liouville theorem for ancient solutions of the Navier-Stokes equations. Later, in \cite{LZ}, when the velocity field belongs to $L^\infty(BMO^{-1})$, Lei and Zhang obtained the corresponding  Liouville theorem and global well-posedness of solutions.  In other words, they solved the global regularity problem of $L^\infty([0,\infty);BMO^{-1})$ solutions of the axially symmetric Navier-Stokes equations based on their proof of the corresponding Liouville theorem. Chae also considered other Liouville type theorem, see \cite{CD}.

In order to describe the result in \cite{KNSS} precisely, we first recall some basic concepts used on the Liouville theorem of the Navier-Stokes equations. A velocity field is called an ancient solution if it exists in the time interval $(-\infty,0)$ and it satisfies the Navier-Stokes equations in some sense. An ancient weak solution is defined to be the ancient solution satisfying the equations \eqref{2DNS} in the sense of distributions. If there is a sequence $t_k\rightarrow -\infty$ such that $v(\cdot,t_k)$ is well defined and $v$ is a mild solution of the Cauchy problem in $\mathbb{R}^n\times(-\infty,0)$ with initial data $v(\cdot,t_k)$, i.e., $v$ satisfies
\begin{equation*}
  v(t)=S(t-t_k)v(t_k)+\int_{t_k}^tS(t-s)P\nabla\cdot(v(s)\otimes v(s))\mathrm{d}s,\ \ t\in(t_k,0),
\end{equation*}
where $S$ denotes the heat operator and $P$ the Helmholtz projection, then $v$ is an ancient mild solution of \eqref{2DNS}. We know from the above definition that ancient weak solutions may permit ``parasitic solutions'' which are of the form
\begin{equation*}
  v(x,t)=b(t),\,\,p(x,t)=-b'(t)x.
\end{equation*}
However, ancient mild solutions can eliminate such solutions.

In \cite{KNSS}, Koch, Nadirashvili, Seregin and
$\mathrm{\check{S}}$ver$\mathrm{\acute{a}}$k
stated a conjecture on the Liouville theorem for the axially symmetric Navier-Stokes equations: bounded, mild, ancient velocity fields are constants. Moreover, the authors of \cite{KNSS} proved such kind of Liouville theorems in 2D case and in the 3D axially symmetric swirl free case, or under the condition $r|v|\leqslant C$.
After that, the authors of \cite{LZ} proved the Liouville theorem for the axially symmetric Navier-Stokes equations when the stream function belongs to $BMO$ space (see also the new proofs of the result given by Seregin\cite{S} and Wang, Zhang\cite{WZ}).

The first main result in our paper is a generalization of the conjecture on the Liouville theorem in \cite{KNSS}. We prove, in 2D and the 3D axially symmetric swirl free case, that Liouville theorems hold for smooth ancient mild solutions of the incompressible Navier-Stokes equations if velocity fields are sublinear with respect to spatial variables and vorticity fields satisfy certain decay or growth condition (see Theorem \ref{2Dresult} and Theorem \ref{3Dnoswirl}). We remark that, unlike Liouville theorems in \cite{KNSS}, we don't need solutions being bounded. Moreover, we will give counterexamples when velocity fields are linear with respect to spatial variables. This shows that, the sublinear condition is sharp for Liouville theorems, which is also the case for harmonic functions.

The other main result in our paper is a Liouville theorem for bounded ancient solutions of the axially symmetric Navier-Stokes equations with nontrivial swirl (see Theorem \ref{3Dswirl}). Let $v$ be a bounded ancient mild solution of the axially symmetric Navier-Stokes equations with $v_\theta\neq 0$ and let $\Gamma=rv_\theta$. We prove that if $\Gamma\in L^\infty_tL^p_x$ where $1\leq p<\infty$, then $v$ must be a constant. As mentioned above, a conjecture on the Liouville theorem was proposed in \cite{KNSS}. Actually, in the 3D axially symmetric case, one can add the condition $\Gamma\in L^\infty_tL^\infty_x$ on the conjecture without loss much generality. The reason is $\Gamma$ is scaling invariant and it also satisfies the maximum principle. When $v_\theta\neq 0$, the Liouville theorem was proved in \cite{KNSS} under the condition $|v|\leq \frac{C}{r}$. We remark that, in our result, we only need the condition on one component $v_\theta$ of the velocity $v$ while no additional conditions are added on the other two components. Moreover, even though we haven't totally proved the conjecture in \cite{KNSS}, our result can still be considered as a progress on the conjecture since we prove the conjecture under the condition $\Gamma\in L^\infty_tL^p_x$ where $p$ can be any finite number. It seems that our result is apparently slightly weaker than the conjecture in \cite{KNSS}.

We are ready to state the main results in this paper.
\begin{thm}\label{2Dresult}
  Let $v$ be a smooth ancient mild solution of the 2D incompressible Navier-Stokes equations and let $w=\nabla\times v$ be the vorticity. If \begin{equation}\label{w}
   \lim\limits_{|x|\rightarrow +\infty}|w(x,t)|=0,
  \end{equation}
  uniformly for all $t\in(-\infty,0)$, then $w\equiv 0$ and $v$ is harmonic.

  If, in addition, $v$ satisfies
  \begin{equation}\label{sublinear}
  \lim\limits_{|x|\rightarrow +\infty}\frac{|v(x,t)|}{|x|}=0,
 \end{equation}
 uniformly for all $t\in(-\infty,0)$, then $v$ must be a constant.
\end{thm}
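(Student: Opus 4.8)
The plan is to build on the first part of the theorem, which is already available: since $w\equiv 0$, and since in two dimensions $w=\partial_1 v_2-\partial_2 v_1$ together with the incompressibility $\nabla\cdot v=\partial_1 v_1+\partial_2 v_2=0$ are exactly the Cauchy--Riemann relations, each component of $v(\cdot,t)$ is harmonic in $x$ for every fixed $t\in(-\infty,0)$. The remaining task then splits into two essentially independent pieces: first upgrade ``harmonic $+$ sublinear'' to ``spatially constant'', and then invoke the mild-solution structure to remove the residual time dependence.

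For the first piece I would fix $t$ and apply the interior gradient (Cauchy) estimate for harmonic functions: since $v(\cdot,t)$ is harmonic on each ball $B_R(x_0)$, one has $|\nabla v(x_0,t)|\le \frac{C}{R}\sup_{B_R(x_0)}|v(\cdot,t)|$ with $C$ an absolute (dimensional) constant. The sublinear hypothesis \eqref{sublinear}, applied slice by slice, forces $\sup_{B_R(x_0)}|v(\cdot,t)|=o(R)$ as $R\to\infty$, so the right-hand side tends to $0$ and hence $\nabla v(x_0,t)=0$ for every $x_0$. Thus $v(x,t)=c(t)$ depends on $t$ alone. This is the classical Liouville theorem for sublinearly growing harmonic functions and is routine; the only care needed is to read \eqref{sublinear} on each time slice so that the gradient bound degenerates to zero simultaneously.

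The key step is the second piece, where one rules out the time-dependent ``parasitic'' profiles by using that $v$ is an ancient \emph{mild}, not merely weak, solution. With $v(x,t)=c(t)$ spatially constant, the tensor $v\otimes v$ is spatially constant, so $\nabla\cdot(v\otimes v)=0$ and therefore $P\nabla\cdot(v\otimes v)=0$. Substituting into the Duhamel (mild) formula along the sequence $t_k\to-\infty$ gives $v(t)=S(t-t_k)v(t_k)$, and since the heat semigroup fixes constant functions, $S(t-t_k)c(t_k)=c(t_k)$. Hence $c(t)=c(t_k)$ for every $t\in(t_k,0)$, and letting $t_k\to-\infty$ shows $c$ is independent of $t$, so $v$ is a genuine constant.

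I expect this second piece to be the main conceptual obstacle, and it is precisely where mildness is indispensable: a non-constant $c(t)$ with pressure $p=-c'(t)\cdot x$ is a perfectly admissible weak solution, so the conclusion fails for ancient weak solutions and only the integral formulation detects and excludes it. By contrast, the first piece is standard harmonic-function theory, and the subtlety there is limited to exploiting the uniform-in-$t$ form of \eqref{sublinear} so that the argument applies uniformly across all time slices.
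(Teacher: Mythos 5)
There is a genuine gap: you have not proved the first, and main, conclusion of the theorem. You open by declaring that ``the first part of the theorem is already available: since $w\equiv 0$\dots'', but $w\equiv 0$ is precisely what has to be established from the hypothesis \eqref{w}, and it is the heart of the paper's argument. The paper proves it by contradiction using the strong maximum principle for the vorticity equation $\partial_t w+v\cdot\nabla w-\Delta w=0$: if $M=\sup w>0$, the maximum cannot be attained at an interior point, so one takes a maximizing sequence $(x_k,t_k)$ with $|t_k|\to\infty$; the decay hypothesis \eqref{w} forces $|x_k|\leq C$; translating $w$ and $v$ to $(x_k,t_k)$ and passing to a $C^{2,1,\alpha}_{local}$ limit produces a solution $\tilde w$ of a drift--diffusion equation attaining the interior value $M$, hence $\tilde w\equiv M$ by the strong maximum principle; consequently $w$ is within $\epsilon$ of $M$ on arbitrarily large parabolic cylinders $Q_{r_0}(x_k,t_k)$ with $|x_k|$ bounded, which contradicts \eqref{w}. (The same applies to the infimum.) Without some version of this compactness-plus-maximum-principle step your proof does not get off the ground; note also that the smoothness needed for the $C^{2,1,\alpha}_{local}$ convergence and the justification that a maximizing sequence with bounded $|x_k|$ exists both come directly from \eqref{w}, so the hypothesis is used in an essential way that your write-up never touches.

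The two pieces you do address are correct. The Cauchy--Riemann/divergence-free observation giving harmonicity, and the interior gradient estimate turning sublinear growth into $\nabla v(\cdot,t)=0$ on each time slice, match the paper's (much terser) ``$\Delta v=0$ and \eqref{sublinear} imply $v$ is constant.'' Your second piece --- substituting the spatially constant profile $v=c(t)$ into the Duhamel formula, noting $P\nabla\cdot(v\otimes v)=0$ and that the heat semigroup fixes constants, so $c(t)=c(t_k)$ --- is actually more explicit than the paper, which leaves the exclusion of parasitic solutions implicit in the phrase ``mild solution.'' That added care is welcome, but it does not compensate for the missing proof that $w\equiv 0$.
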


\begin{thm}\label{3Dnoswirl}
  Let $v$ be a smooth ancient mild solution of the 3D axially symmetric Navier-Stokes equations without swirl and let $w=\nabla\times v=w_{\theta}e_{\theta}$ be the vorticity. Define $\Omega=\frac{w_\theta}{r}$. If
  \begin{equation}\label{w2}
   \lim\limits_{r\rightarrow +\infty}|\Omega|=0,
  \end{equation}
  uniformly for all $t\in(-\infty,0)$, then $w_\theta\equiv 0$ and $v$ is harmonic.

  If, in addition, $v$ satisfies
  \begin{equation}\label{sublinear2}
   \lim\limits_{|x|\rightarrow +\infty}\frac{|v(x,t)|}{|x|}=0,
  \end{equation}
  uniformly for all $t\in(-\infty,0)$, then $v$ must be a constant.
\end{thm}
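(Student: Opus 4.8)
The plan is to reduce the first assertion to a Liouville property for the scalar quantity $\Omega=w_\theta/r$, and then to recover $v$ from the vanishing of the vorticity. The starting point is the well-known evolution equation satisfied by $\Omega$ in the swirl-free axially symmetric case, namely
\[
\partial_t\Omega+(v_re_r+v_ze_z)\cdot\nabla\Omega=\Big(\partial_r^2+\frac{3}{r}\partial_r+\partial_z^2\Big)\Omega ,
\]
which follows from \eqref{noswirlequ} after dividing the $w_\theta$-equation by $r$. The crucial features are that there is no zeroth-order term and no source term, and that the diffusion operator on the right is the five-dimensional Laplacian $\Delta_5$ written for functions depending only on $z$ and on the Euclidean radius $r$ of an auxiliary $\mathbb{R}^4$. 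Thus $\Omega$ is an ancient solution of a pure drift-diffusion equation $\partial_t\Omega+b\cdot\nabla\Omega=\Delta_5\Omega$ with $b=v_re_r+v_ze_z$, and the whole problem mirrors the vorticity equation used in Theorem~\ref{2Dresult}, with $w$ replaced by $\Omega$ and the planar Laplacian replaced by $\Delta_5$.

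First I would prove that the hypothesis \eqref{w2} forces $\Omega\equiv0$ by a maximum-principle argument adapted to ancient solutions. Because the equation has no source term, along any finite time interval the spatial supremum $\sup_x\Omega(\cdot,t)$ is non-increasing and the spatial infimum is non-decreasing. Suppose, for contradiction, that $M:=\sup_{x,t}\Omega>0$. Condition \eqref{w2} confines any maximizing sequence to a bounded range of $r$; after translating in $z$ and in $t$ and using interior parabolic estimates to extract a limit, one obtains an ancient solution of the same equation that attains the value $M$ at an interior point. The strong maximum principle then forces this limit to be identically $M$ on a backward space-time region, which contradicts the fact that it still tends to $0$ as $r\to\infty$. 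Hence $\Omega\le0$; applying the same reasoning to $-\Omega$ gives $\Omega\ge0$, so $\Omega\equiv0$ and therefore $w_\theta=r\Omega\equiv0$.

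Once $w=\nabla\times v\equiv0$, I would combine this with the incompressibility $\nabla\cdot v=0$ to write $\Delta v=\nabla(\nabla\cdot v)-\nabla\times(\nabla\times v)=0$, so that for each fixed $t$ the field $v(\cdot,t)$ is harmonic; this proves the first assertion. For the second assertion, the additional sublinear bound \eqref{sublinear2} together with the interior gradient estimate for harmonic functions, $|\nabla v(x,t)|\le \frac{C}{R}\sup_{B_R(x)}|v(\cdot,t)|$, gives $\nabla v(\cdot,t)\equiv0$ after letting $R\to\infty$. Thus $v=b(t)$ is spatially constant. Finally, inserting a spatially constant field into the mild formulation kills the nonlinear term, since $\nabla\cdot(b\otimes b)=0$, and the heat semigroup fixes constants; hence $v(t)=v(t_k)$ for the sequence $t_k\to-\infty$ in the definition of an ancient mild solution, which forces $b$ to be independent of $t$. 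Therefore $v$ is a constant.

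The main obstacle is the Liouville step for $\Omega$. The decay hypothesis \eqref{w2} controls $\Omega$ only as $r\to\infty$, giving no information as $|z|\to\infty$ or as $t\to-\infty$, whereas a genuinely global maximum principle on the unbounded $(r,z)$-domain requires control of the behaviour at infinity in every direction. Moreover, in this first part $v$---which is the drift $b$ in the $\Omega$-equation---is not assumed to be bounded, so the interior estimates and the normal-families compactness used to produce the limiting solution must be justified with an unbounded, only locally controlled drift. The technical heart of the proof is therefore to upgrade the one-directional decay of $\Omega$ into enough uniform control (in $z$ and $t$) to legitimately run the maximum-principle and compactness argument, and to tame the drift $v$ in the translated frames; everything else is either a direct computation or a classical Liouville statement for harmonic functions.
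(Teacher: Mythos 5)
Your proposal follows essentially the same route as the paper: the same drift--diffusion equation for $\Omega$ read as a 5D equation via $\Delta_5=\Delta+\frac{2}{r}\partial_r$, the same translation-plus-compactness argument producing a limit that attains the supremum and is forced to be constant by the strong maximum principle (contradicting \eqref{w2}), and the same conclusion via $\Delta v=-\nabla\times(w_\theta e_\theta)=0$ and the Liouville theorem for sublinear harmonic functions. The technical obstacle you flag---justifying the $C^{2,1,\alpha}_{local}$ compactness of the translated drifts when $v$ is not assumed bounded in the first assertion---is likewise left implicit in the paper's own proof, and your closing remark on why the spatial constant $b(t)$ is also time-independent (via the mild formulation) is if anything slightly more complete than the paper's.
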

\begin{thm}\label{3Dswirl}
  Let $v$ be a bounded ancient mild solution of the 3D axially symmetric Navier-Stokes equations with $v_\theta\neq 0$ and let $\Gamma=rv_\theta$. If $\Gamma\in L^\infty_tL^p_x(\mathbb{R}^3\times(-\infty,0))$ where $1\leq p<\infty$, then $v$ must be a constant.
\end{thm}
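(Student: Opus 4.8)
The plan is to reduce everything to the swirl-free case by proving that the hypothesis $\Gamma\in L^\infty_tL^p_x$ forces $\Gamma\equiv 0$. The starting point is the transport–diffusion equation for $\Gamma=rv_\theta$. Writing $b=v_re_r+v_ze_z$ for the in-plane part of the velocity, I would substitute $v_\theta=\Gamma/r$ into the second equation of \eqref{axi-NS} and use $\Delta=\partial_r^2+\frac1r\partial_r+\partial_z^2$; the singular zeroth-order terms cancel and one is left with
\begin{equation}\nonumber
\partial_t\Gamma+b\cdot\nabla\Gamma=\Delta\Gamma-\frac{2}{r}\partial_r\Gamma,\qquad \nabla\cdot b=\partial_rv_r+\frac{v_r}{r}+\partial_zv_z=0.
\end{equation}
The decisive observation is that, when this is tested against $|\Gamma|^{q-2}\Gamma$ and integrated over $\mathbb{R}^3$ (weight $r\,dr\,d\theta\,dz$), \emph{neither} drift contributes: the advection term integrates to $-\frac1q\int(\nabla\cdot b)|\Gamma|^q=0$, while the singular drift produces only the boundary quantity $[\,|\Gamma|^q\,]_{r=0}^{r=\infty}$, which vanishes because $\Gamma=rv_\theta\to 0$ on the axis and $\Gamma\in L^p$ decays at spatial infinity. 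Hence for every $q\in[p,\infty)$ one recovers exactly the heat-equation energy identity
\begin{equation}\nonumber
\frac{d}{dt}\|\Gamma(\cdot,t)\|_{L^q(\mathbb{R}^3)}^q=-q(q-1)\int_{\mathbb{R}^3}|\Gamma|^{q-2}|\nabla\Gamma|^2\,dx\le 0.
\end{equation}

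Because these estimates are insensitive to both drifts, I would then invoke the De Giorgi–Nash–Moser machinery with constants \emph{independent} of $b$: combining the family of $L^q$ identities above with the Nash (equivalently Sobolev) inequality on $\mathbb{R}^3$ and running Moser iteration should yield the ultracontractive smoothing bound
\begin{equation}\nonumber
\|\Gamma(\cdot,t)\|_{L^\infty(\mathbb{R}^3)}\le \frac{C}{(t-t_0)^{3/(2p)}}\,\|\Gamma(\cdot,t_0)\|_{L^p(\mathbb{R}^3)},\qquad t_0<t<0,
\end{equation}
with $C=C(p)$ independent of the solution. Since $\Gamma\in L^\infty_tL^p_x$ gives $\|\Gamma(\cdot,t_0)\|_{L^p}\le N$ uniformly in $t_0$, and since $p<\infty$ makes the exponent $3/(2p)$ strictly positive, fixing $t$ and sending $t_0\to-\infty$ forces $\|\Gamma(\cdot,t)\|_{L^\infty}=0$. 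Thus $\Gamma\equiv 0$, that is $v_\theta\equiv 0$. This is exactly where the finiteness of $p$ is used, and it explains why $p=\infty$ must be excluded.

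Once the swirl vanishes, $v$ is a bounded, axially symmetric, swirl-free ancient mild solution, and the Liouville theorem for that class — established in \cite{KNSS}, and also obtainable here from Theorem \ref{3Dnoswirl} after checking the vorticity hypothesis \eqref{w2} — gives that $v$ is constant, completing the proof. The hard part will be the second paragraph: I must justify the global integrations by parts rigorously through spatial cutoffs (controlling the cutoff errors via $\Gamma\in L^p$ and the boundedness of $v$), treat the singular drift $-\frac2r\partial_r$ near the axis (where, for smooth axisymmetric fields, $\frac1r\partial_r\Gamma=\frac{v_\theta}{r}+\partial_rv_\theta$ is in fact regular), and, most importantly, invoke the fact that for a divergence-free drift the Nash/Moser constants do not degenerate — it is precisely this uniformity that allows the $t_0\to-\infty$ limit to yield decay rather than an uncontrolled growth of constants.
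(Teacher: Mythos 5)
Your strategy is viable and genuinely different from the paper's. The paper splits the argument into Lemma \ref{lem5.1} and Lemma \ref{lem5.2}: first a \emph{local} De Giorgi--Moser iteration for \eqref{gammaequ}, carried out at points far from the axis (where the $\frac{1}{r}\psi\partial_r\psi$ error is harmless), gives the mean value inequality \eqref{meanvalue}; this is combined with a purely geometric counting argument --- placing $[|x_0'|]$ disjoint unit balls on the circle $\{|x'|=|x_0'|,\,x_3=x_{03}\}$ and using the axial symmetry of $|\Gamma|^p$ to get $\int_{B_1(x_0)}|\Gamma|^p\leq [|x_0'|]^{-1}\|\Gamma\|_{L^\infty_tL^p_x}^p$ --- to conclude the uniform spatial decay $|\Gamma|\leq Cr^{-1/p}$. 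Then a compactness and strong-maximum-principle argument (translating to points where $\sup\Gamma$ is nearly attained, passing to a limit solution, and contradicting $\Gamma|_{r=0}=0$) forces $\Gamma\equiv 0$. You instead propose a single \emph{global} Nash--Moser ultracontractivity bound, exploiting that both drifts drop out of the global $L^q$ energy inequalities (the divergence-free drift exactly; the singular drift $\frac{2}{r}\partial_r$ up to a boundary term which vanishes at $r=0$ because $\Gamma|_{r=0}=0$ and is nonnegative at $r=\infty$), and you then use ancientness by sending $t_0\to-\infty$. If carried out, this is shorter, bypasses the limit-solution machinery of Lemma \ref{lem5.2} entirely, uses the axisymmetry only through $\Gamma|_{r=0}=0$ rather than through the disjoint-balls trick, and makes transparent why $p<\infty$ is essential. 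Both routes end identically, by reducing to the swirl-free Liouville theorem of \cite{KNSS}.

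The one genuine soft spot is the claim that the $L^q$ identities are available ``for every $q\in[p,\infty)$'' at the outset. A priori one only knows $|\Gamma|\leq Cr$ (from $|v|\leq C$) together with $\Gamma\in L^\infty_tL^p_x$, so $\|\Gamma(\cdot,t)\|_{L^q}$ need not be finite for $q>p$, and the cutoff errors $\frac{C}{R}\int_{B_{2R}\setminus B_R}|\Gamma|^q$ are not controlled before the iteration has produced the higher integrability for later times; moreover at $p=1$ the dissipation coefficient $q(q-1)$ degenerates, so the first smoothing step $L^1\to L^2$ requires either Nash's duality argument (where the adjoint drift acquires a singular divergence supported on the axis) or a separate device, and for $1\leq p<2$ the test function $|\Gamma|^{p-2}\Gamma$ must be regularized. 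None of this is fatal, but it is where the real work lies; the cheapest repair is to first run the paper's local mean value inequality \eqref{meanvalue} once (away from the axis, where it has uniform constants, and combined with $|\Gamma|\leq Cr$ near the axis) to upgrade $\Gamma$ to $L^\infty_{t,x}$, after which all your global integrations by parts and the full range $1\leq p<\infty$ become routine.
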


\begin{rem}
  If $\lim\limits_{r\rightarrow\infty}\Gamma =0$ uniformly, the conclusion in Theorem \ref{3Dswirl} still holds.
\end{rem}
Let us introduce some notations used throughout the paper. Let $X_0=(x_0,t_0)$ be a space-time point in $\mathbb{R}^3\times\mathbb{R}$. For $R>0$, define $B_R(x_0)\subset \mathbb{R}^3$ as the ball of radius $R$ centered at $x_0$. The parabolic cylinder is defined as $Q_R(x_0,t_0)=B_R(x_0)\times(t_0-R^2,t_0)$ centered at $X_0$. If the center is the origin, we use the abbreviations $B_R=B_R(0)$ and $Q_R=Q_R(0,0)$.
$C$ represents an absolute constant above and in the sequel whose value may change from line to line.

The paper is organized as follows. In Section 2, we will make a summary about
current research on ancient solutions and relations between blowup of solutions and ancient solutions. In Section 3, we prove the Liouville theorem for smooth ancient mild solutions in 2D case, i.e., Theorem \ref{2Dresult}. In Section 4, we prove the Liouville theorem for smooth ancient mild solutions in the 3D axially symmetric swirl free case, i.e., Theorem \ref{3Dnoswirl}. In Section 5, we will consider axially symmetric solutions with nontrivial swirl and prove that bounded ancient mild solutions must be constants under the condition $\Gamma\in L^\infty_tL^p_x$, i.e., Theorem \ref{3Dswirl}.

\section{Preliminaries}

In this section, we first explain the relation between Liouville theorems for ancient solutions and the regularity or blowup problem of solutions, especially the relation with possible blow-up rate. Then, we describe results of Liouville theorems for bounded ancient mild solutions in \cite{KNSS}.

\subsection{Blow-up rate}

As well known, solutions of the Navier-Stokes equations are scaling invariant in the following sense. Let $(v,p)$ be a solution of \eqref{2DNS}, then for any $\lambda>0$, the following rescaled pair $(v_{\lambda},p_{\lambda})$ is also a solution:
\begin{equation}\label{scaling}
  v_{\lambda}(x,t)=\lambda v(\lambda x,\lambda^2t),\quad p_{\lambda}(x,t)=\lambda^2 p(\lambda x,\lambda^2t).
\end{equation}
Suppose a solution of the Navier-Stokes equations $v(x,t)$ blows up at $X_0=(x_0,t_0)$. Leray \cite{Leray} proved that there exists a constant $\epsilon_0>0$ such that the blow-up rate is at least
\begin{equation}\nonumber
  \|v(\cdot,t)\|_{L^{\infty}_x}\geqslant\frac{\epsilon_0}{\sqrt{t_0-t}}.
\end{equation}
Caffarelli, Kohn, Nirenberg \cite{CKN} proved that for such a blow-up solution, there exists a constant $\epsilon_1>0$, such that the average of $v$ over  $Q_R(x_0,t_0)$ satisfies
\begin{equation}\nonumber
  \left(\frac{1}{|Q_R|}\int_{Q_R(x_0,t_0)}|v|^3+|p|^\frac{3}{2}\mathrm{d}x\mathrm{d}t\right)^\frac{1}{3}\geqslant
  \frac{\epsilon_1}{R}.
\end{equation}
A natural and meaningful guess is that if a solution blows up at $(x_0,t_0)$, then the blow-up rate is at least
\begin{equation}\label{BUR}
  |v(x,t)|\thicksim\frac{O(1)}{\sqrt{(x_0-x)^2+(t_0-t)}}.
\end{equation}
Actually, this blow-up rate is invariant under the natural scaling
\eqref{scaling} of the Navier-Stokes equations. In other words, there holds the following formula for
$v_{\lambda}(x,t)$ obtained by \eqref{scaling}:
\begin{equation}\nonumber
  |v_{\lambda}(x,t)|=|\lambda v(\lambda x,\lambda^2 t)|\thicksim\frac{O(1)}{\sqrt{(x_0-x)^2+(t_0-t)}}.
\end{equation}

The Serrin-type criteria (see \cite{Serrin,FJR,Giga,GKT,L1967,Sohr,Struwe}) shows that a solution $v$ is regular if it satisfies the following estimate in the parabolic cylinder $Q_1$:
\begin{equation}\label{serrin}
  \|v\|_{L^s_tL^q_x(Q_1)}<\infty,\quad \frac{3}{q}+\frac{2}{s}\leqslant 1,\quad 2\leqslant s<\infty,\
  3<q\leqslant\infty,
\end{equation}
where we have defined
\begin{equation*}
  \|v\|_{L^s_tL^q_x(\Omega\times(t_1,t_2))}=\left\|\|v\|_{L^q_x(\Omega)}\right\|_{L^s_t(t_1,t_2)},
\end{equation*}
for any domain $\Omega\subset \mathbb{R}^3$.
For any $X_0=(x_0,t_0)\in Q_1$, \eqref{serrin} shows the following local smallness of $v$:
\begin{equation}\label{smallnorm}
  \lim_{R\rightarrow 0}\|v\|_{L^s_tL^q_x(Q_R(x_0,t_0))}=0.
\end{equation}
Thus, \eqref{serrin} is called to be $\epsilon$-regularity criteria since it implies the norm's local smallness. For the end point case ($q=3,\,s=\infty$), \eqref{smallnorm} can not be obtained from \eqref{serrin}. The authors of \cite{ISS} proved the end point regularity criteria.

However, these criteria can not exclude the blow-up possibility with the natural scaling rate \eqref{BUR}. In other words, if a solution satisfies the following estimate:
\begin{equation}\label{BUR2}
  |v(x,t)|\leqslant\frac{C}{\sqrt{(x_0-x)^2+(t_0-t)}},
\end{equation}
it is unclear whether the solution is regular or not.
In the axially symmetric case, \cite{CSYT2008} and \cite{CSYT2009} ruled out singular solutions satisfying \eqref{BUR2}. Actually, it was proved in \cite{CSYT2009} that solutions are regular if they satisfy
\begin{equation}\label{BUR3}
  |v| \leq \frac{C}{r}.
\end{equation}
It should be pointed out that \eqref{BUR3} is weaker than \eqref{BUR2}.
The key point of the proof in \cite{CSYT2008} and \cite{CSYT2009} is the H$\mathrm{\ddot{o}}$lder continuity of $\Gamma=rv_{\theta}$ at $r=0,\,t=0$.
They proved that
\begin{equation}\label{gamma}
  |\Gamma|\leqslant Cr^\alpha,
\end{equation}
for some positive constant $\alpha$.\footnote{Recently, Lei, Zhang \cite{LZnew} proved that if $\Gamma\leq C_*|\ln r|^{-2}$, then the solution is regular (see also an improved result of Wei \cite{Wei}).}
It is known that, in the case when $v_{\theta}\not\equiv 0$, the global regularity of solutions of \eqref{axi-NS} is unknown. However, it follows from the partial regularity theory of \cite{CKN} that the singular points of axially symmetric solutions can only appear on the axis of symmetry. For the axially symmetric Navier-Stokes equations, $\Gamma$ is a special quantity and it satisfies the following parabolic equation:
\begin{equation}
  \partial_t\Gamma+b\cdot\nabla\Gamma+\frac{2}{r}\partial_r\Gamma=\Delta \Gamma,
\end{equation}
where $b=v_re_r+v_ze_z$. We remark that $\Gamma$ enjoys the maximum principle.
\eqref{gamma} breaks the scaling invariance, thus the problem can be transformed from order one to $\epsilon$-regularity. To prove the H$\mathrm{\ddot{o}}$lder continuity of $\Gamma$, the authors of \cite{CSYT2008} applied the De Giorgi-Moser iteration method while Nash's method was used in \cite{CSYT2009}.

\subsection{Liouville Theorem and Blow-up Rate}\label{LiouvilleBU}

Liouville theorem is also an important method to study the blow-up rate of solutions. The authors of \cite{KNSS} proved, by using Liouville theorem, that if a weak solution $v$ of the axially symmetric Navier-Stokes equations in $\mathbb{R}^3\times(0,T)$ belongs to $L^{\infty}_{x,t}(\mathbb{R}^3\times(0,T'))$ for each $T'<T$ and satisfies
\begin{equation}\label{sverak_rate}
  |v(x,t)|\leqslant\frac{C}{\sqrt{x_1^2+x_2^2}},\,\,\,(x,t)\in \mathbb{R}^3\times(0,T),
\end{equation}
then $v$ won't blow up at $T$.

Since \cite{KNSS} is closely related to the current paper, here for the completeness, we recall the main idea of the proof in \cite{KNSS}. The proof of the statement mainly depends on the rescaling procedure. For convenience, we assume that $v$ is a mild solution which is bounded in $\mathbb{R}^3\times(0,T')$. If it blows up at $T$, we can apply the rescaling procedure to construct a bounded ancient mild solution. Specifically speaking, it can be done as follows. Let $h(t)=\sup\limits_{x\in \mathbb{R}^3}|v(x,t)|$, it follows from Leray \cite{Leray} that
\begin{equation}
  h(t)\geqslant\frac{C}{\sqrt{T-t}},
\end{equation}
for some small positive constant $C$. Let $H(t)=\sup\limits_{0\leqslant s\leqslant t}h(s)$, then there exists a sequence $t_k\nearrow T$ such that $h(t_k)=H(t_k)$. Choose a sequence of numbers $\gamma_k\searrow 1$. For each $k$, set $N_k=H(t_k)$ and choose $x_k\in \mathbb{R}^3$ such that $$M_k=|v(x_k,t_k)|\geqslant \frac{N_k}{\gamma_k}.$$
Define
\begin{equation}
 \label{rescaling}
 v_k(y,s)=\frac{1} {M_k} v(x_k+\frac {y}{M_k},t_k + \frac{s}{M_k^2}),
\end{equation}
and it is easy to see that functions $v_k$ are defined in $\mathbb{R}^3\times(A_k,B_k)$, where
$$A_k=-M_k^2 t_k,\,\,B_k=M_k^2(T-t_k)\geqslant \frac{C^2}{\gamma_k^2}.$$
Moreover, we know from the definition of $v_k$ that they satisfy
\begin{equation}\label{vkbound}
 \mbox{$|v_k|\leqslant \gamma_k$ in $\mathbb{R}^3\times(A_k,0)$ and $|v_k(0,0)|=1$}.
\end{equation}
Meanwhile, $v_k$ are mild solutions of the Navier-Stokes equations in $\mathbb{R}^3\times(A_k,0)$ with initial data
$$v_{k0}(y)=\frac1 {M_k}v_0(x_k+\frac {y}{M_k}).$$
It can be proved that there is a subsequence of $v_k$ converging to an ancient mild solution $\bar{v}$ of the Navier-Stokes equations and
$\bar{v}$ satisfies
\begin{equation}\label{vbar}
  |\bar{v}(y,s)|\leqslant 1,\,\,(y,s)\in \mathbb{R}^3\times(-\infty,0)\,\,\text{and}\,|\bar{v}(0,0)|=1.
\end{equation}
Let us denote $x_k=(x'_k,x_{k3})$ with $x'_k=(x_{k1},x_{k2})$. It follows from \eqref{sverak_rate} that $|x'_k|\leqslant \frac C {M_k}$. This shows that functions $v_k$ are axially symmetric. Therefore the limit function $\bar{v}$ is also axially symmetric with respect to a suitable axis.
Also, $\bar{v}$ satisfies the scale-invariant bound \eqref{sverak_rate}.
To sum up, $\bar{v}$ is a bounded ancient mild solution of the axially symmetric Navier-Stokes equations satisfying \eqref{sverak_rate}.  Hence Liouville theorem implies $\bar{v}=0$. However, \eqref{vbar} shows $\bar{v}(0,0)=1$, which makes a contradiction. Therefore, the solution won't blow up at $T$.

\subsection{Liouville Theorem for Bounded Solutions}\label{LTBS}

In this subsection, we recall the Liouville theorems proved in \cite{KNSS}. For completeness, we will present both the conclusion and the idea of the proof in \cite{KNSS}. The authors of \cite{KNSS} stated that any ancient mild solution of the Navier-Stokes equations with bounded velocity is a constant. They proved the statement in 2D and the 3D axially symmetric case with additional conditions. Now let us recall Liouville theorems in different cases.

Case 1. 2D case.

In 2D, the Liouville theorem in \cite{KNSS} was stated as follows.
Let $v$ be a bounded ancient mild solution of the Navier-Stokes equations in 2D. Then $v$ is a constant.

The proof is based on the equation of the vorticity $w=\nabla\times v$ which is a scalar in 2D defined by
$$w=\partial_1u_2-\partial_2u_1,$$
satisfying
\begin{equation}\label{curlequ}
  w_t+v\cdot\nabla w-\Delta w=0.
\end{equation}
We remark that $w$ satisfies the Harnack inequality which can be used to show that if $w\not\equiv 0$, then $w$ has to be almost equal to its maximum or minimum in large space-time areas. This contradicts the boundedness of $v$. Therefore $w\equiv 0$. It then follows from
$$\nabla\cdot v=0,\,\,\nabla\times v=0$$
that
$$\Delta v=\nabla(\nabla\cdot v)-\nabla\times\nabla\times v=0$$
holds in the whole space. Then the boundedness of $v$ implies that $v$ is a constant.

Case 2. Axially symmetric swirl free case.

In the case when $v_\theta=0$, \cite{KNSS} proved the following Liouville theorem. Let $v$ be a bounded ancient weak solution of the axially symmetric Navier-Stokes equations without swirl. Then $v(x,t)=(0,0,b_3(t))$ for some bounded measurable function $b_3:(-\infty,0)\rightarrow \mathbb{R}$.

The proof mainly depends on the equation of $\Omega=\frac{w_\theta}{r}$.  We know that $\Omega$ satisfies
\begin{equation}\label{wtheta1}
  \partial_t\Omega+u_r\partial_r\Omega
  +u_z\partial_z\Omega=\Delta \Omega+\frac{2}{r}\partial_r\Omega.
\end{equation}
This equation can be viewed as an equation defined in 5D. Actually, let $z=y_5$ and $r=\sqrt{y_1^2+y_2^2+y_3^2+y_4^2}$, and we note that for any function $\widetilde{f}(y_1,y_2,y_3,y_4,y_5)$
$=f(r,z)$ in 5D, there holds
$$\Delta_y\widetilde{f}(y_1,y_2,y_3,y_4,y_5)=(\partial^2_r f+\frac{3}{r}\partial_rf+\partial^2_zf)(r,z).$$
Therefore, $\Delta+\frac{2}{r}\partial_r$ can be interpreted as the 5D Laplacian operator $\Delta_5$ and the equation \eqref{wtheta1} can be written as
\begin{equation}\label{wtheta2}
  \partial_t\Omega+u_r\partial_r\Omega
  +u_z\partial_z\Omega=\Delta_5\Omega.
\end{equation}
Similar to the 2D case, we can get $\Omega\equiv0$ Due to \eqref{wtheta2}. Thus $w_\theta=0$ and $w=0$. Since $v$ is also divergence free, we have $v=(0,0,b_3(t))$.

Case 3. Axially symmetric case with nontrivial swirl.

In the axially symmetric case with nontrivial swirl, the Liouville theorem was obtained in \cite{KNSS} by adding an additional condition. The theorem was stated as follows. Let $v$ be a bounded ancient weak solution of the axially symmetric Navier-Stokes equations satisfying
\begin{equation}\label{vcond}
  |v(x,t)|\leq \frac{C}{\sqrt{x_1^2+x_2^2}},\quad (x,t)\in\mathbb{R}^3\times(-\infty,0).
\end{equation}
Then $v=0$ in $\mathbb{R}^3\times(-\infty,0)$.

The key component of the proof is the equation of $\Gamma=rv_\theta$. It is known that $\Gamma$ satisfies
\begin{equation}\label{gammaequ}
  \partial_t\Gamma+v_r\partial_r\Gamma+v_z\partial_z\Gamma+\frac{2}{r}\partial_r\Gamma
  =\Delta\Gamma.
\end{equation}
From condition \eqref{vcond}, we know that
$$|\Gamma|\leq C.$$
For any $\lambda>0$, we define
$$v_{\lambda}(x,t)=\lambda v(\lambda x,\lambda^2t),\quad
\Gamma_{\lambda}(x,t)=\Gamma(\lambda x,\lambda^2t).$$
Then $\Gamma_\lambda$ and $u_\lambda$ also satisfy the equation \eqref{gammaequ}.
Moreover, in $\mathbb{R}^3\times(-\infty,0)$, there holds
\begin{equation*}
  |v_{\lambda}|\leq\frac{C}{r},\quad|\Gamma_{\lambda}|\leq C,
\end{equation*}
uniformly for $\lambda>0$.
Set $M=\sup\limits_{\mathbb{R}^3\times(-\infty,0)}\Gamma$,
and we can use the rescaling procedure to move the points where $\Gamma_\lambda$ approaches $M$ to the place near the $x_3$-axis.
Consequently $\Gamma_\lambda$ is very large near the $x_3$-axis which is unreasonable since $\Gamma_\lambda=0$ when $r=0$.
Therefore, $\Gamma$ must be 0, i.e., $v_\theta=0$. This shows that any solution of the equations has no swirl. It then follows from the result for $v_\theta=0$ and condition \eqref{vcond} that $v=0$.

\section{Liouville Theorem for the Navier-Stokes Equations in 2D}

In this section, we will prove the Liouville theorem for the Navier-Stokes equations in 2D, i.e., Theorem \ref{2Dresult}.

\begin{proof}[Proof of Theorem \ref{2Dresult}]

First, we use a contradiction argument to prove $w\equiv 0$.

If $w\not\equiv 0$, we can suppose that $$\sup\limits_{(x,t)\in \mathbb{R}^2\times(-\infty,0)}w(x,t)=M>0.$$
In 2D, $w$ satisfies the following equation:
\[
\partial_t w+v\cdot\nabla w-\Delta w=0.
\]
 Based on the maximum principle of parabolic equations, we know that $w$ can not reach its maximum at an interior point. Hence there exists a sequence of points $(x_k,t_k), |t_k|\rightarrow \infty, |x_k|\leq C$ such that
\[
\lim_{k\rightarrow +\infty}w(x_k,t_k)=M.
\]
Here $C>0$ is a fixed constant and $|x_k|\leq C$ is due to our assumption \eqref{w}.

Define $\tilde{w}_k$ and $\tilde{v}_k$ as follows
$$\tilde{w}_k(y,s)=w(x_k+y,t_k+s),$$
$$\tilde{v}_k(y,s)=v(x_k+y,t_k+s),$$
then there exist two subsequences which are still denoted by $\tilde{w}_k$ and $\tilde{v}_k$ such that the two subsequences converge to $\tilde{w}$ and $\tilde{v}$ in $C^{2,1,\alpha}_{local}$, respectively. Moreover, $\tilde{w}$ satisfies:
\[
\begin{cases}
&\partial_s \tilde{w}+\tilde{v}\cdot\nabla \tilde{w}-\Delta \tilde{w}=0,\quad (y,s) \in\mathbb{R}^2\times(-\infty,0), \\
&\t{w}(0,0)=M.
\end{cases}
\]
The above shows that $\t{w}$ reaches its maximum in interior, thus we have $\t{w}\equiv M$ by the maximum principle.

Hence, for any $\epsilon >0$ and fixed $r_0>0$, there exists a number $k_0$, such that if $k>k_0$, $$|\t{w}_k-M|<\epsilon,$$
holds on the parabolic cylinder $Q_{r_0}$.
This implies that $w$ satisfies
$$|w-M|<\epsilon,$$
on $Q_{r_0}(x_k,t_k)$.
This is a contradiction with our assumption that $\lim\limits_{|x|\rightarrow +\infty}|w(t,x)|=0$ uniformly for all $t\in (-\infty,0)$ when $r_0$ is sufficiently large and $|x_k|$ is bounded. Thus $w\equiv0$.

Because $$\nabla\cdot v=0,\,\,\nabla\times v=0,$$
we have that $\Delta v=0$ holds in the whole space which shows that $v$ is harmonic. Moreover, since $v$ satisfies \eqref{sublinear}, $v$ must be a constant. We complete the proof of the theorem.

\end{proof}

\begin{rem}
The condition $v$ being sublinear (condition \eqref{sublinear}) in Theorem \ref{2DNS} is very important and can not be removed, even when $w\equiv0$. Moreover, we can give an counterexample to show that if $v$ is linear with respect to spatial variables, then there exists a nontrivial ancient solution of the Navier-Stokes equations in 2D. Hence, the sublinear condition is sharp for our Liouville theorem in 2D.

For example, let
$$v=(x_1,-x_2),$$
$$p=-\frac{1}{2}x_1^2-\frac{1}{2}x_2^2,$$
then $w=\partial_1u_2-\partial_2u_1=0$, and $(v,p)$ satisfies
\[
\begin{cases}
&v\cdot\nabla v-\Delta v+\nabla p=0,\\
&\nabla\cdot v=0,
\end{cases}
\]
which shows that $v$ is a stationary solution of the 2D Navier-Stokes equations. However, $v$ is not a constant solution.

\end{rem}

\section{Liouville Theorem for Axially Symmetric Navier-Stokes Equations with $v_\theta= 0$}

In this section, we will prove the Liouville theorem for the axially symmetric Navier-Stokes equations without swirl, i.e., Theorem \ref{3Dnoswirl}. In this case, $v=v_re_r+v_ze_z$.

\begin{proof}[Proof of Theorem \ref{3Dnoswirl}]

First, we prove $w_\theta\equiv0$ by contradiction.
In this case, $\Omega$ satisfies the following equation:
$$\partial_t\Omega+u_r\partial_r\Omega
+u_z\partial_z\Omega=\Delta \Omega+\frac{2}{r}\partial_r\Omega.$$
As described in Section \ref{LTBS}, $\Delta+\frac{2}{r}\partial_r$ can be considered as the 5D Laplacian operator $\Delta_5$. Hence, with a slight abuse of notation, the above equation can be written as the following equation defined in $\mathbb{R}^5\times(-\infty,0)$:
\begin{equation*}
  \partial_t\Omega+u_r\partial_r\Omega
  +u_z\partial_z\Omega=\Delta_5\Omega.
\end{equation*}
If $w_{\theta}\not\equiv0$, without loss of generality, we assume that
$$\sup\limits_{(x,t)\in \mathbb{R}^5\times(-\infty,0)}\Omega=M_0>0.$$
 Owing to the maximum principle of parabolic equations, $\Omega$ can not get its maximum in interior. Thus there exists a sequence $x_k=(x'_k,z_k,0,0)\in \mathbb{R}^5$ $(x'_k=(x_{k1},x_{k2}))$ and $t_k$, such that $|x_k'|\leq C$, where $C$ is a uniform constant independent of $t_k,z_k$ and
$$\lim\limits_{k\rightarrow \infty}\Omega(x_k,t_k)=M_0.$$

Define
$$\widetilde{\Omega}_{k}(y,s)=\Omega(x_k+y,t_k+s),$$
$$\widetilde{v}_{k}(y,s)=v(x_k+y,t_k+s),$$
where $y=(y_1,y_2,y_3,0,0)\in \mathbb{R}^5$, then up to subsequences, $\widetilde{\Omega}_{k}$ and $\widetilde{v}_k$ converge to $\widetilde{\Omega}$ and $\widetilde{v}$ in $C^{2,1,\alpha}_{local}$, respectively, and $\widetilde{\Omega}$ satisfies
\begin{equation*}
  \begin{cases}
    \partial_s\widetilde{\Omega}+\widetilde{u}_r\partial_r\widetilde{\Omega}
  +\widetilde{u}_z\partial_z\widetilde{\Omega}=\Delta_5\widetilde{\Omega},
  \quad(y,s)\in \mathbb{R}^5\times(-\infty,0),\\
  \widetilde{\Omega}(0,0)=M_0.
  \end{cases}
\end{equation*}
Therefore, $\widetilde{\Omega}$ get its maximum in interior which implies $\widetilde{\Omega}\equiv M_0$ according to the maximum principle.

Let $B_{r}(y)\subset\mathbb{R}^5$ be a ball of radius $r$ centered at $y=(y_1,y_2,y_3,0,0)$, $Q_r(y,s)=B_r(y)\times(s-r^2,s)$.
It then follows from
$$\lim\limits_{k\rightarrow\infty}\widetilde{\Omega}_k=M_0>0,$$
that for any $\epsilon>0$ and fixed $r_0>0$, if $k$ is sufficiently large,
\[
|\widetilde{\Omega}_{k}-M_0|\leq\epsilon,
\]
holds on $Q_{r_0}(0,0)$.
This implies that $\Omega$ satisfies
$$|\Omega-M_0|<\epsilon,$$ on $Q_{r_0}(x_k,t_k)$.
When $r_0$ is sufficiently large, this is a contradiction with the assumption that $\Omega$ satisfies $\lim\limits_{r\rightarrow\infty}|\Omega|=0$ uniformly for $t$. Hence $w_\theta\equiv0$.

Because $v=v_re_r + v_ze_z$, we get $\Delta v=-\nabla\times(w_\theta e_\theta)=0$ which shows that $v$ is harmonic.
Moreover, it follows from the fact $v$ is sublinear with respect to $x$ and $\Delta v=0$ that $v$ must be a constant. We complete the proof of Theorem \ref{3Dnoswirl}.

\end{proof}

\begin{rem}
The first conclusion in Theorem \ref{3Dnoswirl} shows that if
$w_\theta$ is sublinear with respect to $r$, then $w_\theta\equiv0$.
We can give a counterexample to show that the conclusion will be wrong if $w_\theta$ is linear with respect to $r$, and consequently we can not get $v$ is harmonic. The above analysis tells us that condition \eqref{w2} is necessary. For example, let
$$v=(-x_1x_3,-x_2x_3,x_3^2),$$
$$p=-\frac 1 2 x_3^4+2x_3,$$
then $v_\theta=v\cdot e_\theta=0$ and $(v,p)$ satisfies
\[
\begin{cases}
&v\cdot\nabla v-\Delta v+\nabla p=0,\\
&\nabla\cdot v=0.
\end{cases}
\]
However $w_\theta=-r\not\equiv0$, $\Delta v=(0,0,2)\neq0$.

On the other hand, the condition $v$ being sublinear with respect to $x$ (condition \eqref{sublinear2}) is also necessary and can not be removed, even when $w_\theta=0$. The following is a counterexample to show that if $v$ is linear with respect to spatial variables, then there exists a nontrivial ancient solution of the axially symmetric Navier-Stokes equations without swirl. It then follows that the sublinear condition is sharp for our Liouville theorem. For example, let
$$v=(-\frac{1}{2}x_1,-\frac{1}{2}x_2,x_3),$$
$$p=\frac{1}{8}x_1^2+\frac{1}{8}x_2^2+\frac{1}{2}x_3^2,$$
then we have
$$v_{\theta}=v\cdot e_{\theta}=0,$$
$$v_{r}=v\cdot e_{r}=-\frac{1}{2}r,$$
$$v_z=x_3,$$
which implies $w_{\theta}=0$ and $(v,p)$ satisfies
\[
\begin{cases}
&v\cdot\nabla v-\Delta v+\nabla p=0,\\
&\nabla\cdot v=0.
\end{cases}
\]
Therefore $v$ is a stationary solution of the axially symmetric Navier-Stokes equations without swirl. However, $v$ is not a constant solution.

\end{rem}

\section{Liouville Theorem for Axially Symmetric Navier-Stokes Equations with $v_\theta\neq 0$}

In this section, we will prove the Liouville theorem for the axially symmetric Navier-Stokes equations with $v_\theta\neq 0$, i.e., Theorem \ref{3Dswirl}.
We split the proof of Theorem \ref{3Dswirl} into two parts (Lemma \ref{lem5.1} and Lemma \ref{lem5.2}) since the strategies used to prove these two lemmas are very different. The proof of Lemma \ref{lem5.1} is based on the Nash-Moser iteration method. While proving Lemma \ref{lem5.2}, we mainly use the maximum principle of parabolic equations and a contradiction argument.

\begin{lem}\label{lem5.1}
 Let $v$ be a bounded ancient mild solution of the axially symmetric Navier-Stokes equations. $\Gamma=rv_\theta$. If $\Gamma\in L^\infty_tL^p_x(\mathbb{R}^3\times (-\infty,0))$, where $1\leq p<\infty$,
then
\[
\lim\limits_{r\rightarrow\infty}\Gamma(x,t)=0
\]
holds uniformly for $t$ and $z$.
\end{lem}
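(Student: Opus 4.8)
The plan is to run the De Giorgi--Nash--Moser local boundedness estimate and thereby upgrade the global integrability $\Gamma\in L^\infty_tL^p_x$ into pointwise decay as $r\to\infty$. First I would record the equation for $\Gamma$, namely \eqref{gammaequ},
$$\partial_t\Gamma+v_r\partial_r\Gamma+v_z\partial_z\Gamma+\tfrac{2}{r}\partial_r\Gamma=\Delta\Gamma,$$
and rewrite it in Cartesian form as $\partial_t\Gamma+\tilde b\cdot\nabla\Gamma=\Delta\Gamma$ with effective drift $\tilde b=(v_r+\tfrac{2}{r})e_r+v_z e_z$ and $\Delta$ the full Laplacian. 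The key structural point is that far from the axis the singular coefficient is harmless: on $\{r\ge 2\}$ one has $\tfrac{2}{r}\le 1$, so boundedness of $v$ gives $|\tilde b|\le\|v\|_{L^\infty}+1$ there, and $\Gamma$ solves a uniformly parabolic equation of unit ellipticity with uniformly bounded drift. (One also checks $\nabla\cdot\tilde b=0$, which will be convenient below.)

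Next I would apply the parabolic local maximum estimate on unit cylinders $Q_1(X_0)$ centered at $X_0=(x_0,t_0)$ with $r(x_0)\ge 3$, so that $Q_1(X_0)\subset\{r\ge 2\}$. Since the coefficient bounds are uniform in $x_0$, the constant in the estimate is uniform, and
$$|\Gamma(x_0,t_0)|\le\sup_{Q_{1/2}(X_0)}|\Gamma|\le C\Big(\int_{Q_1(X_0)}|\Gamma|^p\,dx\,dt\Big)^{1/p},\qquad C=C(\|v\|_{L^\infty},p).$$
It then suffices to show the local mass $\int_{Q_1(X_0)}|\Gamma|^p$ tends to $0$ as $r(x_0)\to\infty$, uniformly in $t_0$ and $z_0$. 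For the smallness I would use axial symmetry: since $\Gamma=\Gamma(r,z,t)$ is $\theta$-independent, enlarging $B_1(x_0)$ to the full annulus it meets gives, for each fixed $t$,
$$\int_{B_1(x_0)}|\Gamma(\cdot,t)|^p\,dx\le\int_{\{r\ge r_0-1\}}|\Gamma(\cdot,t)|^p\,dx,\qquad r_0=r(x_0),$$
and the right-hand side is the tail of the convergent integral $\|\Gamma(\cdot,t)\|_{L^p(\mathbb{R}^3)}^p\le C^p$, hence vanishes as $r_0\to\infty$. This already yields decay uniform in $z_0$, and integrating in $t$ over $(t_0-1,t_0)$ controls $\int_{Q_1(X_0)}|\Gamma|^p$.

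The main obstacle is the uniformity in $t$. The bound above shows $\int_{\{r\ge R\}}|\Gamma(\cdot,t)|^p\,dx\to 0$ as $R\to\infty$ for each fixed $t$, but the mere uniform bound $\sup_t\|\Gamma(\cdot,t)\|_{L^p}\le C$ does not by itself prevent this far-field mass from concentrating in a unit time window $(t_0-1,t_0)$ with $t_0\to-\infty$, so the rate of tail decay could a priori depend on $t_0$. To close this gap I would invoke the PDE rather than just the integrability. Since $\tilde b$ is divergence free, testing the equation against $|\Gamma|^{p-2}\Gamma$ gives formally $\frac{d}{dt}\|\Gamma(\cdot,t)\|_{L^p}^p=-p(p-1)\int|\Gamma|^{p-2}|\nabla\Gamma|^2\le 0$, so $t\mapsto\|\Gamma(\cdot,t)\|_{L^p}$ is non-increasing. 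Localizing this computation with a radial cutoff $\phi_R$ (equal to $1$ on $\{r\ge R+1\}$ and $0$ on $\{r\le R\}$) yields
$$\frac{d}{dt}\int\phi_R|\Gamma|^p\,dx\le C\int_{\{R\le r\le R+1\}}|\Gamma|^p\,dx,$$
so the growth of the tail mass is governed only by the flux through the thin transition region, which is small for $R$ large. Turning this differential inequality into tail smallness that is genuinely uniform over all $t\in(-\infty,0)$ is the delicate step; once it is in place, the Nash--Moser estimate and the axisymmetric tail bound from the previous paragraph combine immediately to give $\lim_{r\to\infty}\Gamma=0$ uniformly in $t$ and $z$.
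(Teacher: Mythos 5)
Your Step 1 is essentially the paper's Step 1: a De Giorgi--Nash--Moser local maximum estimate on unit parabolic cylinders giving $\sup_{Q_{1/2}(X_0)}|\Gamma|\le C(\int_{Q_1(X_0)}|\Gamma|^p)^{1/p}$ with constants uniform in the center (the paper runs the Moser iteration directly, handling the $\frac{2}{r}\partial_r$ term by integration by parts rather than by restricting to $\{r\ge 2\}$, but that is a cosmetic difference). The genuine gap is in Step 2, and you have correctly diagnosed it yourself: bounding $\int_{B_1(x_0)}|\Gamma(\cdot,t)|^p$ by the tail $\int_{\{r\ge r_0-1\}}|\Gamma(\cdot,t)|^p$ gives decay for each fixed $t$ but with a rate that may depend on $t$, and the uniform bound $\sup_t\|\Gamma(\cdot,t)\|_{L^p}\le C$ alone cannot rule out the mass escaping to spatial infinity along a sequence $t_k\to-\infty$. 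Your proposed repair --- a localized $L^p$ energy inequality $\frac{d}{dt}\int\phi_R|\Gamma|^p\le C\int_{\{R\le r\le R+1\}}|\Gamma|^p$ --- does not close this: the flux term on the right is itself only bounded by the global $L^p$ norm (not small) unless tail smallness is already known, and for an \emph{ancient} solution there is no initial time at which to anchor the resulting Gronwall argument, so the "delicate step" you defer is in fact the whole difficulty.

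The paper avoids all of this with a purely geometric use of axial symmetry that you did not exploit. Instead of enlarging $B_1(x_0)$ to an annulus (which throws away the quantitative gain), place $[\,|x_0'|\,]$ pairwise disjoint unit balls $B_1(x_i)$ with centers on the circle $S=\{|x'|=|x_0'|,\ x_3=x_{03}\}$. Since $\Gamma$ is independent of $\theta$, each ball carries exactly the same mass $\int_{B_1(x_i)}|\Gamma(\cdot,t)|^p=\int_{B_1(x_0)}|\Gamma(\cdot,t)|^p$, so summing and comparing with the whole-space integral gives
\begin{equation*}
[\,|x_0'|\,]\int_{B_1(x_0)}|\Gamma(\cdot,t)|^p\,\mathrm{d}x\le \int_{\mathbb{R}^3}|\Gamma(\cdot,t)|^p\,\mathrm{d}x\le \|\Gamma\|_{L^\infty_tL^p_x}^p,
\end{equation*}
valid for \emph{every} $t$ simultaneously. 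This yields the quantitative bound $(\int_{Q_1(X_0)}|\Gamma|^p)^{1/p}\le Cr^{-1/p}$ uniformly in $t$ and $z$, with no PDE energy argument needed, and combined with your Step 1 it finishes the proof. You should replace your tail argument with this pigeonhole-by-symmetry step.
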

\begin{lem}\label{lem5.2}
  Under the conditions of Lemma \ref{lem5.1}, $v$ must be a constant.
\end{lem}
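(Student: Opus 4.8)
The plan is to prove Lemma \ref{lem5.2} in two stages: first show that the swirl vanishes identically, i.e.\ $\Gamma = rv_\theta \equiv 0$, and then reduce to the swirl-free Liouville theorem to conclude that $v$ is a constant. The whole first stage rests on the parabolic equation \eqref{gammaequ},
\[
\partial_t\Gamma + v_r\partial_r\Gamma + v_z\partial_z\Gamma + \frac{2}{r}\partial_r\Gamma = \Delta\Gamma,
\]
together with two facts about $\Gamma$: it vanishes on the axis, since $|\Gamma| = r|v_\theta| \leq r\|v\|_{L^\infty}$, and it vanishes as $r\to\infty$ uniformly in $z$ and $t$ by Lemma \ref{lem5.1}. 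These two facts confine any near-supremum behaviour of $\Gamma$ to a compact range of $r$, which is precisely what makes a translation-plus-maximum-principle argument (rather than the rescaling used in \cite{KNSS}) available.

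In detail, I would suppose for contradiction that $M := \sup_{\mathbb{R}^3\times(-\infty,0)}\Gamma > 0$. Set $a = M/(2\|v\|_{L^\infty})$; then $|\Gamma| < M/2$ whenever $r < a$, while by Lemma \ref{lem5.1} there is some $R > a$ with $|\Gamma| < M/2$ whenever $r > R$. Hence any sequence of points $(r_k, z_k, t_k)$ along which $\Gamma \to M$ must satisfy $r_k \in [a, R]$. Since $\Gamma$ need not attain its supremum at a finite point, I would recenter in $z$ and $t$ (both of which preserve axial symmetry and the radial coordinate $r$): define $\tilde v_k(x,s) = v(x + z_k e_z,\, t_k + s)$ and $\tilde\Gamma_k(x,s) = \Gamma(x + z_k e_z,\, t_k + s) = r(\tilde v_k)_\theta$. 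Boundedness of $v$ and interior parabolic regularity give, after passing to a subsequence, convergence $\tilde v_k \to \tilde v$ and $\tilde\Gamma_k \to \tilde\Gamma$ in $C^{2,1}_{\mathrm{loc}}$ away from the axis, with $r_k \to r_* \in [a,R]$, where $\tilde\Gamma = r\tilde v_\theta$ solves the same $\Gamma$-equation. By construction $\tilde\Gamma(r_*, 0, 0) = M = \sup\tilde\Gamma$, so $\tilde\Gamma$ attains its supremum at an interior point with $r_* > 0$.

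On the open set $\{r > 0\}$ the equation is uniformly parabolic with bounded coefficients (the drift $v_re_r + v_ze_z$ is bounded and $2/r$ is bounded away from the axis), so the strong maximum principle applies: attaining the interior maximum $M$ forces $\tilde\Gamma \equiv M$ on $\{r>0\}\times(-\infty,0]$, using that $\mathbb{R}^3$ minus the $z$-axis is connected and propagating the equality backward in time. Letting $r\to 0$ and using $\tilde\Gamma = r\tilde v_\theta$ with $\tilde v_\theta$ bounded gives $M = 0$, a contradiction. Thus $\sup\Gamma \le 0$; running the identical argument on $-\Gamma$, which solves an equation of the same form, yields $\inf\Gamma \ge 0$, whence $\Gamma \equiv 0$ and $v_\theta \equiv 0$.

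For the second stage, $v = v_re_r + v_ze_z$ is now a bounded axially symmetric swirl-free ancient mild solution, so the swirl-free Liouville theorem recalled in Section \ref{LTBS} (cf.\ \cite{KNSS}) gives $v(x,t) = (0,0,b_3(t))$. For such a field the nonlinearity $P\nabla\cdot(v\otimes v)$ vanishes and the heat semigroup fixes spatially constant data, so the mild (Duhamel) formulation forces $b_3$ to be independent of $t$; hence $v$ is a constant. I expect the main obstacle to lie in the first stage, and within it the delicate point to be the maximum-principle step: one must secure the convergence of the recentered sequence, apply the strong maximum principle correctly on $\{r>0\}$, and then pass to the axis by continuity to extract the contradiction. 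The confinement of the maximizing sequence to $r \in [a,R]$, supplied jointly by the axis behaviour of $\Gamma$ and by Lemma \ref{lem5.1}, is what replaces the scaling argument of \cite{KNSS} and makes a pure translation suffice.
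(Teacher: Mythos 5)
Your proof is correct and follows essentially the same strategy as the paper's: a strong-maximum-principle contradiction on $\{r>0\}$ against the boundary condition $\Gamma|_{r=0}=0$, with Lemma \ref{lem5.1} confining the maximizing sequence to bounded $r$, followed by the swirl-free Liouville theorem of \cite{KNSS} and the mild-solution formulation to kill the residual $b_3(t)$. Your observation that $|\Gamma|\le r\|v\|_{L^\infty}$ pins $r_k$ into a compact interval $[a,R]$, letting you translate only in $z$ and $t$ and treat the attained and non-attained cases uniformly, is a mild streamlining of the paper's argument, which translates by the full $x_k$ and must then recenter the axis of symmetry of the limit solution.
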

Now we prove the above two lemmas.

\begin{proof}[Proof of Lemma\ref{lem5.1}]
  When $r$ is sufficiently large, we will deduce by two steps that
  \begin{equation}\label{target}
  |\Gamma(x,t)|\leq Cr^{-\frac{1}{p}}.
  \end{equation}
  This immediately proves our lemma.

  Step 1. Prove the mean value inequality \eqref{meanvalue}.

  We know that $\Gamma$ satisfies the following equation
  \begin{equation}\label{Gammaequ}
  \partial_t\Gamma+b\cdot\nabla\Gamma+\frac{2}{r}\partial_r\Gamma-\Delta\Gamma=0,
  \end{equation}
  where $b=v_re_r+v_ze_z$. We will derive an energy estimate of De Giorgi type for \eqref{Gammaequ} at a fixed point $X=(x_0,t_0)$ where $x_0=(x_{01},x_{02},x_{03})$. For this purpose, we need a refined cut-off function near $X$. Let $\frac{1}{2}\leq \sigma_2<\sigma_1\leq 1$. Define $\phi\in C^\infty_0(\mathbb{R}^3)$ and $\eta\in C^\infty_0(-\infty,0)$ satisfying
  \[
   \begin{cases}
  &\textmd{supp}\phi\subset B_{\sigma_1}(x_0),\phi=1~\text{in}~B_{\sigma_2}(x_0),0\leq\phi\leq1,\\
  &|\nabla\phi|\leq \frac{C}{\sigma_1-\sigma_2},~|\nabla^2\phi|\leq \frac{C}{(\sigma_1-\sigma_2)^2};\\
  &\textmd{supp}\eta\subset (t_0-\sigma^2_1,t_0),~\phi=1 ~\text{on} ~(t_0-\sigma^2_2,t_0),0\leq\eta\leq1,\\
  &|\eta '|\leq\frac{C}{(\sigma_1-\sigma_2)^2}.\\
  \end{cases}
  \]
  Let $\psi(t,x)=\eta(t)\phi(x)$ be the cut-off function. Define $Q_{\sigma_i}(x_0,t_0)= B_{\sigma_i}(x_0)\times(t_0-\sigma^2_i,t_0)$, $i=1,2$.
  Testing \eqref{Gammaequ} by $2p\Gamma^{2p-1}\psi^2$ and integrating it on $\mathbb{R}^3\times(-\infty,0)$ gives
  \[
   \iint(\partial_t\Gamma+b\cdot\nabla\Gamma+\frac{2}{r}\partial_r\Gamma-\Delta\Gamma)2p\Gamma^{2p-1}\psi^2 \mathrm{d}x\mathrm{d}t=0.
  \]
  By calculating each term on the above, we get
  \begin{equation}\label{term1}
   \iint\partial_t\Gamma \cdot 2p\Gamma^{2p-1}\psi^2 \mathrm{d}x\mathrm{d}t=\int\Gamma^{2p}\psi^2|^{t_0}_{t_0-\sigma^2_1} \mathrm{d}x-\iint\Gamma^{2p}\partial_t\psi^2\mathrm{d}x\mathrm{d}t,
  \end{equation}
  \begin{equation}\label{term2}
   \iint\ (b\cdot\nabla)\Gamma\cdot 2p\Gamma^{2p-1}\psi^2 \mathrm{d}x\mathrm{d}t
    =\iint\ (b\cdot\nabla)\Gamma^{2p}\psi^2\mathrm{d}x\mathrm{d}t
    =-2\iint\ \Gamma^{2p}\psi(b\cdot\nabla)\psi \mathrm{d}x\mathrm{d}t,
  \end{equation}
  \begin{equation}\label{term3}
   \begin{split}
    \iint\ \frac{2}{r}\partial_r\Gamma\cdot 2p\Gamma^{2p-1}\psi^2 \mathrm{d}x\mathrm{d}t
   =4\pi\iiint \partial_r\Gamma^{2p}\psi^2 \mathrm{d}r\mathrm{d}z\mathrm{d}t
   =-4\iint\ \Gamma^{2p}\frac{\psi\partial_r\psi}{r} \mathrm{d}x\mathrm{d}t,
  \end{split}
 \end{equation}
  \begin{equation}\label{term4}
  \begin{split}
   \iint\ -\Delta\Gamma\cdot 2p\Gamma^{2p-1}\psi^2 \mathrm{d}x\mathrm{d}t
  =&\frac{2(2p-1)}{p}\iint\ |\nabla(\Gamma^p\psi)|^2 \mathrm{d}x\mathrm{d}t
  -2\iint \Gamma^{2p}|\nabla \psi|^2\mathrm{d}x\mathrm{d}t\\
  &+\frac{2(p-1)}{p}\iint\ \Gamma^{2p} \psi\Delta \psi \mathrm{d}x\mathrm{d}t.
  \end{split}
 \end{equation}
 It then follows from \eqref{term1}, \eqref{term2}, \eqref{term3} and \eqref{term4} that
 \begin{equation}\label{deGiorgi}
  \begin{split}
  &\sup\limits_{t\in (t_0-\sigma^2_1,t_0)}\parallel \Gamma^{p}\psi\parallel^2_{L^2_x(\mathbb{R}^3)}+\int_{Q_{\sigma_1}(x_0,t_0)} |\nabla(\Gamma^p\psi)|^2\mathrm{d}x\mathrm{d}t\\
  \leq &C\int_{Q_{\sigma_1}(x_0,t_0)}\Gamma^{2p}(\psi\Delta\psi+|\nabla\psi|^2+\psi \partial_t \psi+\frac{\psi \partial_r \psi}{r}+\psi(b\cdot\nabla)\psi) \mathrm{d}x\mathrm{d}t\\
  \leq&C\int_{Q_{\sigma_1}(x_0,t_0)}\Gamma^{2p}(\frac{1}{(\sigma_1-\sigma_2)^2}+\frac{1}{\sigma_1-\sigma_2}) \mathrm{d}x\mathrm{d}t\\
  \leq&\frac{C}{(\sigma_1-\sigma_2)^2}\int_{Q_{\sigma_1}(x_0,t_0)}\Gamma^{2p} \mathrm{d}x\mathrm{d}t,
  \end{split}
\end{equation}
where $C$ also depends on $\|b\|_{L^\infty_{t,x}}$.

We are now ready to derive the mean value inequality based on \eqref{deGiorgi} using Moser's iteration method.
By H$\mathrm{\ddot{o}}$lder's inequality and Sobolev imbedding theorem, one has
\begin{equation}\nonumber
 \begin{split}
 &\int_{Q_{\sigma_1}(x_0,t_0)}(\psi|\Gamma|^p)^3\mathrm{d}x\mathrm{d}t\\
 \leq & C\int^{t_0}_{t_0-\sigma^2_1}\|\psi\Gamma^p\|^{\frac{3}{2}}
 _{L^2_x(\mathbb{R}^3)}\|\psi\Gamma^p\|^{\frac{3}{2}}_{L^6_x(\mathbb{R}^3)}\mathrm{d}t\\
 \leq & C[\frac{1}{(\sigma_1-\sigma_2)^2}\int_{Q_{\sigma_1}(x_0,t_0)}\Gamma^{2p}\mathrm{d}x\mathrm{d}t] ^{\frac{3}{4}}\int^{t_0}_{t_0-\sigma^2_1}\|\nabla(\psi\Gamma^p)\|^{\frac{3}{2}}
 _{L^2_x(\mathbb{R}^3)}\mathrm{d}t\\
 \leq&C[\frac{1}{(\sigma_1-\sigma_2)^2}\int_{Q_{\sigma_1}(x_0,t_0)}\Gamma^{2p}\mathrm{d}x\mathrm{d}t]
 ^{\frac{3}{4}}(\int^{t_0}_{t_0-\sigma^2_1}\|\nabla(\psi\Gamma^p)\|^2_{L^2_x(\mathbb{R}^3)}\mathrm{d}t)
 ^{\frac{3}{4}}(\int^{t_0}_{t_0-\sigma^2_1}dt)^{\frac{1}{4}}\\
 \leq&C[\frac{1}{(\sigma_1-\sigma_2)^2}\int_{Q_{\sigma_1}(x_0,t_0)}\Gamma^{2p}\mathrm{d}x\mathrm{d}t]
 ^{\frac{3}{2}}.
 \end{split}
\end{equation}
Hence,
\begin{equation}\label{iteIneq}
  \int_{Q_{\sigma_2}(x_0,t_0)}|\Gamma|^{3p} \mathrm{d}x\mathrm{d}t\leq C[\frac{1}{(\sigma_1-\sigma_2)^2}\int_{Q_{\sigma_1}(x_0,t_0)}\Gamma^{2p}\mathrm{d}x\mathrm{d}t]
  ^{\frac{3}{2}}.
\end{equation}

Let $\sigma=\frac{1}{2}$ be a constant. For integers $j\geq0$, set
$\sigma_1=\frac{1}{2}(1+\sigma^j)$, $\sigma_2=\frac{1}{2}(1+\sigma^{j+1})$ and
$p=(\frac{3}{2})^j$ in \eqref{iteIneq}, we then get
\begin{equation}\nonumber
 \begin{split}
 (\int_{Q_{\frac{1}{2}(1+\sigma^{j+1})}(x_0,t_0)}|\Gamma|^{2(\frac{3}{2})^{j+1}}
 \mathrm{d}x\mathrm{d}t)
 ^{\frac{1}{2}(\frac{2}{3})^{j+1}}
 \leq&[C\frac{4}{\sigma^{2(j+1)}}\int_{Q_{\frac{1}{2}(1+\sigma^{j})}(x_0,t_0)}\Gamma
 ^{2(\frac{3}{2})^j}\mathrm{d}x\mathrm{d}t]^{\frac{1}{2}(\frac{2}{3})^j} \\
 \leq&4C^{\frac{1}{2}(\frac{2}{3})^j}2^{(\frac{2}{3})^j(j+1)}
 (\int_{Q_{\frac{1}{2}(1+\sigma^{j})}(x_0,t_0)}\Gamma^{2(\frac{3}{2})^j}\mathrm{d}x\mathrm{d}t)
 ^{\frac{1}{2}(\frac{2}{3})^j} \\
 \leq&4C^{\frac{1}{2}\sum\limits^j\limits_{k=0}(\frac{2}{3})^k}
 2^{\sum\limits^j\limits_{k=0}(\frac{2}{3})^k(k+1)}
 (\int_{Q_1(x_0,t_0)} \Gamma^2 \mathrm{d}x\mathrm{d}t)^{\frac{1}{2}} \\
 \leq&C(\int_{Q_1(x_0,t_0)} \Gamma^2 \mathrm{d}x\mathrm{d}t)^{\frac{1}{2}},
 \end{split}
\end{equation}
where $Q_1(x_0,t_0)=B_1(x_0)\times(t_0-1,t_0)$.
Let $j\rightarrow\infty$ in the above inequality, we obtain
\begin{eqnarray}
 \sup\limits_{Q_{\frac{1}{2}}(x_0,t_0)}|\Gamma| &\leq&C(\int_{Q_1(x_0,t_0)} \Gamma^2 \mathrm{d}x\mathrm{d}t)^{\frac{1}{2}}.\nonumber
\end{eqnarray}
From this, a well-known algebraic trick gives the following mean value inequality:
\begin{equation}\label{meanvalue}
 \sup\limits_{Q_{\frac{1}{2}}(x_0,t_0)}|\Gamma| \leq C(\int_{Q_1(x_0,t_0)} |\Gamma|^p \mathrm{d}x\mathrm{d}t)^{\frac{1}{p}}, \quad \textmd{$1\leq p<+\infty$}.
\end{equation}

Step 2. Prove the uniform decay \eqref{unidecay}.

We will use the integrality of $\Gamma$ in $L^\infty_tL^p(R^3\times(-\infty,0])$ to prove that when $r=|x'_0|=\sqrt{|x_{01}|^2+|x_{02}|^2}\rightarrow+\infty$, one has
\begin{equation}\label{unidecay}
(\int_{Q_1(x_0,t_0)}|\Gamma|^p \mathrm{d}x\mathrm{d}t)^{\frac{1}{p}}\leq Cr^{-\frac{1}{p}},
\end{equation}
uniformly for $t$ and $x_3$ (i.e. $z$).
It is obvious that
\[
(\int_{Q_1(x_0,t_0)}|\Gamma|^p \mathrm{d}x\mathrm{d}t)^{\frac{1}{p}}\leq \sup_{t\in (-\infty,0)}(\int_{B_1(x_0)}|\Gamma(\cdot,t)|^p \mathrm{d}x)^{\frac{1}{p}}.
\]
Thus it suffices to deal with the term $\int_{B_1(x_0)}|\Gamma(\cdot,t)|^p \mathrm{d}x$. Define a family of balls as $B_1(x)$ of radius 1, centered at $x$ where
$$x\in S=\{x=(x_1,x_2,x_3)|~|x'|=|x'_0|,x_3=x_{03}\}.$$
When $|x'_0|$ is sufficiently large, we can place at least $[|x'_0|]$ ($[|x'_0|]$ is the max integer less than $|x'_0|$) disjoint balls $B_1(x_i)$ on the curve $S$, where $1\leq i\leq [|x'_0|]$ and $x_i\in S$. Due to the axis-symmetry of $\Gamma$, we have
\[
\int_{B_1(x_i)}|\Gamma(\cdot,t)|^p \mathrm{d}x=\int_{B_1(x_0)}|\Gamma(\cdot,t)|^p \mathrm{d}x.
\]
Thus,
\begin{eqnarray}
[|x'_0|]\int_{B_1(x_0)}|\Gamma(\cdot,t)|^p \mathrm{d}x&=&\sum\limits^{[|x'_0|]}\limits_{i=0}\int_{B_1(x_i)}|\Gamma(\cdot,t)|^p \mathrm{d}x \nonumber \\
&\leq& \int_{\mathbb{R}^3}|\Gamma(\cdot,t)|^p \mathrm{d}x.\nonumber
\end{eqnarray}
It then follows that
\begin{equation}\label{decay}
 \begin{split}
(\int_{Q_1(x_0,t_0)}|\Gamma|^p \mathrm{d}x\mathrm{d}t)^{\frac{1}{p}}
\leq & ([|x'_0|])^{-\frac{1}{p}}\sup_{t\in (-\infty,0)}(\int_{\mathbb{R}^3}|\Gamma(\cdot,t)|^p\mathrm{d}x)^{\frac{1}{p}}\\
\leq & C([|x'_0|])^{-\frac{1}{p}} \|\Gamma\|_{L^\infty_tL^p_x}.
 \end{split}
\end{equation}
Since $\Gamma\in L^\infty_tL^p_x(\mathbb{R}^3\times(-\infty,0))$, we can deduce
from \eqref{decay} that \eqref{unidecay} holds.

After the above two steps, combining \eqref{meanvalue} and \eqref{unidecay} gives
\[
\sup\limits_{Q_{\frac{1}{2}}(x_0,t_0)}|\Gamma|\leq Cr^{-\frac{1}{p}}.
\]
Let $r\rightarrow \infty$, we then complete the proof of the lemma.

\end{proof}

Now let us prove Lemma \ref{lem5.2}.

\begin{proof}[Proof of Lemma \ref{lem5.2}]

Since $\Gamma=rv_\theta$, we have $\Gamma|_{r=0}=0$. We are going to use a contradiction argument and maximum principle to prove $\Gamma\equiv0$.
Suppose
\[
M=\sup_{(x,t)\in \mathbb{R}^3\times (-\infty,0)}\Gamma\geq0\geq\inf_{(x,t)\in \mathbb{R}^3\times (-\infty,0)}\Gamma=m.
\]
We will show $M=m=0$ in two cases.

Case 1. If $M>0$ and there exists a point $(x_0,t_0)$ $(|x_0|< \infty,|t|<\infty)$ such that $\Gamma(x_0,t_0)=M$.

First we notice that $r_0=\sqrt{(x_{01})^2+(x_{02})^2}\neq0$ because $\Gamma|_{r=0}=0$. Let $\widehat{\Gamma}=M-\Gamma$, then $\widehat{\Gamma}$ satisfies
\[
\partial_t\widehat{\Gamma}+b\cdot\nabla\widehat{\Gamma}
+\frac{2}{r}\partial_r\widehat{\Gamma}-\Delta\widehat{\Gamma}=0.
\]
Since $\Gamma$ satisfies the maximum principle in $(\mathbb{R}^3\setminus\{r=0\})\times(-\infty,0)$, we can see that $\Gamma\equiv M$ on $(\mathbb{R}^3\setminus\{r=0\})\times(-\infty,0)$, which induces a contradiction with $\Gamma\equiv0$ on $\{r=0\}\times(-\infty,0)$
and the fact that $\Gamma$ is continuous in $\mathbb{R}^3\times(-\infty,0)$.

Case 2. If $M>0$ and $\Gamma$ reaches $M$ when $|x|\rightarrow\infty$ or $t\rightarrow -\infty$.

In this case, there exists a sequence of points $x_k=(x'_k ,x_{k3})$ and a time sequence $t_k$ such that $\lim\limits_{k\rightarrow +\infty}\Gamma(x_k,t_k)=M$, where $|x_k|\rightarrow\infty$ or $t_k\rightarrow -\infty$.
Meanwhile, we know from Lemma \ref{lem5.1} that
$\lim\limits_{r\rightarrow +\infty}|\Gamma|=0$ holds uniformly for $t$ and $x_3$.
Thus we can assume that $|x'_k|\leq C$ where $C$ is independent of $t$ and $x_3$.
Without loss of generality, up to a subsequence, we can assume that $\lim\limits_{k\rightarrow +\infty}x'_k=(x_{01},x_{02})$. Because $\lim\limits_{k\rightarrow +\infty}\Gamma(x_k,t_k)=M>0$ and $\Gamma|_{r=0}=0$, we get $r_0=\sqrt{(x_{01})^2+(x_{02})^2}\neq 0$.

Let $v_k(y,s)=v(x_k+y,t_k+s)$, where $v$ is a bounded ancient mild solution of the axially symmetric Navier-Stokes equations, then $v_k(y,s)=v(x_k+y,t_k+s)$ is a bounded ancient mild solution of the following equations:
\begin{equation*}
 \begin{cases}
   \partial_sv_k+v_k\cdot\nabla v_k+\nabla p=\Delta v_k,\\
   \nabla\cdot v_k=0.
 \end{cases}
\end{equation*}
Let $v_\infty=\lim\limits_{k\rightarrow+\infty}v_k(y,s)$, then due to Lemma 6.1 in \cite{KNSS}, we know that $v_\infty$ is a bounded ancient mild solution of the limit equations. Since $v_k$ is axially symmetric with respect to the axis $\{x(x_1,x_2,x_3)| x_1=-x_{k1},x_2=-x_{k2},x_3\in \mathbb{R}\}$ and $\lim\limits_{k\rightarrow +\infty}x'_k=(x_{01},x_{02})$, we have $v_\infty$ is also axially symmetric and the axis is $\{x=(x_1,x_2,x_3)|x_1=-x_{01},x_2=-x_{02},x_3\in \mathbb{R}\}$.

Set $\widetilde{v}_{\infty}(y,s)=v_\infty(y-x_0,s)$, where $x_0=(x_{01},x_{02},0)$, then $\widetilde{v}_{\infty}(y,s)$ is symmetric with respect to the axis $\{x=(x_1,x_2,x_3)| x_1=x_2=0,x_3\in \mathbb{R}\}$.
Let $\widetilde{\Gamma}_\infty=r(y')\widetilde{v}_{\infty,\theta}$, where $r(y')=\sqrt{(y_1)^2+(y_2)^2}$, then $\widetilde{\Gamma}_\infty$ satisfies
\[
\partial_s\widetilde{\Gamma}_\infty+\widetilde{b}_\infty\cdot\nabla\widetilde{\Gamma}_\infty
+\frac{2}{r}\partial_r\widetilde{\Gamma}_\infty-\Delta\widetilde{\Gamma}_\infty=0,
\]
where $\widetilde{b}_\infty
=\widetilde{v}_{\infty,r}e_r+\widetilde{v}_{\infty,z}e_z$.

Now let $y-x_0=x$, $\widetilde{r}=\sqrt{(x_1+x_{01})^2+(x_2+x_{02})^2}$ and $\widetilde{\Gamma}=\widetilde{r}v_{\infty,\theta}$, one has
\begin{equation}\label{Gammatilde}
\partial_s\widetilde{\Gamma}+\widetilde{b}\cdot\nabla\widetilde{\Gamma}
+\frac{2}{\widetilde{r}}\partial_{\widetilde{r}}\widetilde{\Gamma}
-\Delta\widetilde{\Gamma}=0,
\end{equation}
where $\widetilde{b} =v_{\infty,r}e_r+v_{\infty,z}e_z$.
We know from the above definition that $\widetilde{\Gamma}(x_0,0)=0$ and
\begin{eqnarray}
\widetilde{\Gamma}(0,0)&=&\sqrt{(x_{01})^2+(x_{02})^2}v_{\infty,\theta}(0,0)\nonumber \\
                       &=&\sqrt{(x_{01})^2+(x_{02})^2}\lim\limits_{k\rightarrow +\infty}v_\theta(x_k,t_k)\nonumber \\
                       &=&\lim\limits_{k\rightarrow +\infty}\Gamma(x_k,t_k)=M.\nonumber
\end{eqnarray}
Since $\widetilde{\Gamma}$ satisfies the equation \eqref{Gammatilde}, one has that $\widetilde{\Gamma}$ satisfies the strong maximum principle in the whole space except $\{x=(x_1,x_2,x_3)| x_1=-x_{01},x_2=-x_{02},x_3\in \mathbb{R}\}$. Similar to Case 1, we get the contradiction.

Combining Case 1 and Case 2, we get $M=0$. Similarly, we can also get $m=0$. Hence, $\Gamma\equiv0$ in $(-\infty,0)\times \mathbb{R}^3$ which implies $v_\theta\equiv0$. According to Theorem 5.2 in \cite{KNSS}, we have that $v$ must be a constant and we complete the proof of the lemma. Meanwhile, Theorem \ref{3Dswirl} is obtained by Lemma \ref{lem5.1} and Lemma \ref{lem5.2}.

\end{proof}

\section*{Acknowledgement}
Z. Lei and N. Zhao was in part supported by NSFC (grant No. 11421061 and 11222107),
National Support Program for Young Top-Notch Talents, Shanghai Shu Guang project, and SGST 09DZ2272900. Qi S. Zhang was in part supported by the Simons Foundation and would like to thank School of Mathematics, Fudan University for supporting his visit. Particular thanks go to Prof. Hongjie Dong for useful discussion.


\end{document}